\newtheorem{lemma}{Lemma}
\newtheorem{theorem}{Theorem}
\newtheorem{proposition}{Proposition}
\newtheorem{assumptions}{Assumptions}
\newtheorem{corollary}{Corollary}
\title{\vspace{-2em}Exponential decay estimates for the resolvent kernel on a Riemannian manifold}
\author{Zhirayr Avetisyan\footnote{Department of Mathematics: Analysis, Logic and Discrete Mathematics, Ghent University, Belgium. zhirayr.avetisyan@ugent.be, contact@z-avetisyan.com}}
\date{}
\begin{document}
\maketitle
\vspace{-2em}
\begin{abstract} On a complete, connected, non-compact Riemannian manifold, with Ricci curvature bounded from below, we establish exponential decay estimates at infinity for the spherical sums of the resolvent kernel, i.e., the integral kernel of the resolvent $(-\Delta+\lambda)^{-1}$ of the Laplace-Beltrami operator for $\lambda>0$. The exponential decay rate in these estimates is optimal, in the sense that it cannot be improved based solely on a uniform bound on the Ricci curvature.

In addition to technical results, the paper offers two conceptual takeaways. Firstly, in pursuit of optimal decay estimates, it may be worth shifting the emphasis from the resolvent kernel to its density - a geometrically more natural object. Secondly, contrary to expectations, the exponential decay of the resolvent kernel is driven mainly by the area/volume growth, and the bottom of the spectrum of $\Delta$ is not a decisive factor. This is in contrast with the heat kernel decay, and shows that optimal resolvent kernel estimates cannot be derived from heat kernel estimates.
\end{abstract}

\tableofcontents

\section{Introduction}

It is known among specialists, that on Riemannian manifolds with exponential volume growth, the rate of the exponential growth interferes with the long-distance decay of the heat and resolvent kernels. While this has been observed and discussed by many authors, the precise way of that interference does not appear to be known beyond very narrow, explicitly tractable classes of manifolds. The present paper is devoted to the study of how the volume growth and the spectral parameter come together to determine the exponential decay of the spherical sums of the resolvent kernel. Our main assumption is a lower bound on the Ricci curvature, and we argue that, without additional information, our exponential upper bounds cannot be improved; they coincide with exponential lower bounds for a certain class of manifolds. In particular, for manifolds with perfectly isotropic mean spherical curvature (i.e., harmonic spaces or open models), our spherical mean estimates automatically produce pointwise estimates.

For a complete, connected, non-compact Riemannian manifold $(M,g)$ of dimension $n>1$, let $\Delta=\operatorname{div}\nabla$ be the Laplace-Beltrami operator. All Lebesgue spaces $L^q(M)$ are understood with respect to the Riemannian volume $\mathrm{vol}_g$. It is well-known that $\Delta$ is essentially self-adjoint in $L^2(M)$ with the core $C^\infty_c(M)$, and the spectrum of $\Delta$ is non-positive, $\sigma(\Delta)\subset(-\infty,0]$. Denote
$$
E\doteq-\sup\sigma(\Delta)\in[0,+\infty).
$$
For $\lambda>0$, the resolvent $(-\Delta+\lambda)^{-1}$ is a bounded operator on $L^2(M)$, and is an integral operator with a strictly positive integral kernel
\begin{equation}
K_\lambda\in C^\infty(M\times M\setminus\operatorname{diag}(M\times M),\mathbb{R}_+),\label{KlambdaSmooth}
\end{equation}
where $\mathbb{R}_\pm\doteq\pm(0,+\infty)$. In fact, it is known that
\begin{equation}
K_\lambda(x,y)=\int\limits_0^{+\infty}e^{-\lambda t}p(x,y,t)dt,\quad\forall x,y\in M,\quad\forall\lambda\in\mathbb{R}_+\label{KpRel}
\end{equation}
where $p\in C^\infty(M\times M\times\mathbb{R}_+)$ is the heat kernel on $(M,g)$ (for the basic facts about the heat kernel on Riemannian manifolds, see the Section 7.3 in Grigor'yan's monograph \cite{Gri09}). From the stochasticity of the heat kernel (which holds in amazing generality, see Theorem 11.8 in \cite{Gri09}), namely,
$$
\int\limits_Mp(z,y,t)d\mathrm{vol}_g(z)=\int\limits_Mp(x,z,t)d\mathrm{vol}_g(z)=1,\quad\forall x,y\in M,
$$
it follows that
\begin{equation}
\int\limits_MK_\lambda(z,y)d\mathrm{vol}_g(z)=\int\limits_MK_\lambda(x,z)d\mathrm{vol}_g(z)=\frac1\lambda,\quad\forall x,y\in M,\quad\forall\lambda\in\mathbb{R}_+.\label{IntKEq}
\end{equation}
Indeed,
$$
\int\limits_M\int\limits_0^{+\infty}e^{-\lambda t}p(z,y,t)dtd\mathrm{vol}_g(z)=\int\limits_0^{+\infty}e^{-\lambda t}\int\limits_Mp(z,y,t)d\mathrm{vol}_g(z)dt=\frac1\lambda.
$$
Let $d:M\times M\to[0,+\infty)$ be the geodesic distance function (metric) on $(M,g)$. It is clear from (\ref{IntKEq}) that the resolvent kernel $K_\lambda(x,y)$ must have a certain decay as $d(x,y)\to+\infty$, at least in order to guarantee the convergence of the integral. However, to the best of our knowledge, the most accurate decay estimates of the resolvent kernel on general Riemannian manifolds known in the literature do \textbf{not} suffice to ensure integrability.

One approach to the resolvent kernel $K_\lambda$ is to address it directly. For $\lambda>0$, the operator $-\Delta+\lambda$ is coercive, and $K_\lambda$ is a Green's function for it. Thus, if $(M,g)$ has bounded geometry, then by Remark 2.1 in \cite{Anc87}, we have
$$
K_\lambda(x,y)\le c\,e^{-\alpha d(x,y)}
$$
for $d(x,y)>1$. Here the exponential coefficient $\alpha$ is unknown, and if the manifold has exponential volume growth, it is unclear if this estimate guarantees integrability. But one can obtain an explicit formula for $\alpha$ following the general scheme by Cheeger, Gromov and Taylor \cite{CGT82}. Namely, applying Corollary 3.2 in \cite{CGT82} to the function $f(z)=(z^2+E+\lambda)^{-1}$ (in view of the remark at the bottom of page 39 of the same paper), one arrives at the estimate
$$
K_\lambda(x,y)\le c_\alpha\,e^{-\alpha d(x,y)},\quad\forall\alpha\in\left(0,\sqrt{E+\lambda}\right),
$$
for $d(x,y)>r_0>0$. This decay estimate does not guarantee the integrability of the kernel $K_\lambda$ if the volume growth is exponential, as $E\le\frac{\mu^2}4$ (see more details below). In fact, we are not aware of any integrable exponential decay estimates for the Green's function in the literature with an explicit coefficient, applicable to the Laplace-Beltrami operator on sufficiently generic complete Riemannian manifolds, e.g., those of bounded geometry.

Another way to derive decay properties for the resolvent kernel is to utilise the known decay properties of the heat kernel, and use the relation (\ref{KpRel}). The archetypical example is the Euclidean heat kernel in $\mathbb{R}^n$,
$$
p(x,y,t)=\frac1{(4\pi t)^{\frac{n}2}}e^{-\frac{d(x,y)^2}{4t}}.
$$
Plugging into (\ref{KpRel}), and using formulae (\ref{Intabc}) and (\ref{BesselAsympBig}) in the Appendix, we find that
\begin{equation}
K_\lambda(x,y)=2\left(\frac{2\sqrt{\lambda}}r\right)^{\frac{n}2-1}\mathrm{K}_{\frac{n}2-1}(\sqrt{\lambda}r)\le c_n\frac{\lambda^{\frac{n-3}4}}{r^{\frac{n-1}2}}e^{-\sqrt{\lambda}r}\label{RnK}
\end{equation}
for $x,y\in M$ such that $r=d(x,y)>\frac1{\sqrt{\lambda}}$ (here $\mathrm{K}_{\frac{n}2-1}$ is the modified Bessel function of the second kind). Note that, since the volume growth in the Riemannian manifold $M=\mathbb{R}^n$ is polynomial, this exponential decay is sufficient to guarantee the convergence of the integral in (\ref{IntKEq}).

The situation for Riemannian manifolds with superpolynomial volume growth is more subtle. In this context, the best known example is the $n=m+1$-dimensional hyperbolic space $\mathbb{H}^{m+1}$, where we have the two-sided uniform estimate
\begin{equation}
p(x,y,t)\simeq_m\frac{(1+d(x,y)+t)^{\frac{m}2-1}(1+d(x,y))}{t^{\frac{m+1}2}}e^{-\frac{m^2}4t-\frac{d(x,y)^2}{4t}-\frac{m}2d(x,y)},\label{DaviesMandouvalos}
\end{equation}
see Theorem 3.1 in \cite{DaMa88}. Since $(1+r+t)^{\frac{m}2-1}\simeq_m(1+r)^{\frac{m}2-1}+t^{\frac{m}2-1}$, with the help of the same relations (\ref{KpRel}) and (\ref{Intabc}), we arrive at the uniform two-sided estimate
$$
K_\lambda(x,y)\simeq_m r^{\frac12}\left(1+\frac1r\right)^{\frac{m}2}\left(\frac{m^2}4+\lambda\right)^{\frac{m-1}4}e^{-\frac{m}2r}\,\mathrm{K}_{\frac{m-1}2}\left(\sqrt{\frac{m^2}4+\lambda}\,r\right)
$$
\begin{equation}
+\left(1+\frac1r\right)e^{-\left[\frac{m}2+\sqrt{\frac{m^2}4+\lambda}\,\right]\,r},\label{HnK}
\end{equation}
for $x,y\in M$ and $r=d(x,y)$. With the expansion (\ref{BesselAsympBig}) we obtain the estimate
\begin{equation}
K_\lambda(x,y)\le\left(c_m\left(\frac{m^2}4+\lambda\right)^{\frac{m-2}4}+d_m\right)e^{-\left[\frac{m}2+\sqrt{\frac{m^2}4+\lambda}\right]\,r},\label{HnK1}
\end{equation}
for $x,y\in M$ such that $r=d(x,y)>(\frac{m^2}4+\lambda)^{-\frac12}$. In this case, the volume growth is $V(r)\sim\exp(mr)$, so that the above decay is sufficient to secure the convergence of the integral in (\ref{IntKEq}) for $\lambda>0$.

A slight generalisation of the hyperbolic spaces $\mathbb{H}^{m+1}$ are what Mazzeo and Melrose \cite{MaMe87} call asymptotically hyperbolic manifolds, i.e., simply connected Riemannian manifolds with sectional curvatures becoming constant near the conformal boundary. The authors of \cite{MaMe87} provide many analytical properties of the heat kernel, but not precise decay estimates. The latter was done in the recent paper \cite{ChHa20} by Chen and Hassel under additional assumptions of non-positivity of sectional curvatures (Cartan-Hadamard condition) and spectral conditions on the Laplace-Beltrami operator $\Delta$. Under these conditions, the authors provide an exponential decay estimate for the heat kernel similar to (\ref{DaviesMandouvalos}) in Proposition 19. They also give an exponential decay estimate $e^{-\frac{m}2r}$ for the resolvent kernel (Proposition 16), which misses the second, larger contribution $e^{-\sqrt{\frac{m^2}4+\lambda}\,r}$, and thus does not guarantee integrability. However, their optimal estimate for the heat kernel in Proposition 19 easily gives the full decay of the resolvent kernel, as in the classical hyperbolic case (\ref{HnK}).

A further line of generalisation from hyperbolic spaces is represented by non-compact Riemannian symmetric spaces of rank one. In this case, the rich and very rigid structure of the semisimple Lie theory allows for a detailed analysis and explicit constructions, yielding heat kernel estimates essentially of the same form as in the hyperbolic case \cite{AnJi99}. There are also similar estimates of the heat kernel on the so-called Damek-Ricci harmonic spaces \cite{ADY96}. All these estimates, in nature identical to the Davies-Mandouvalos estimates for the harmonic space, produce resolvent kernel estimates of the form (\ref{HnK1}), which is rather satisfactory.

For general Riemannian manifolds, however, we do not have such precise heat kernel upper bounds. One of the sharpest such results to date that we know of, for a sufficiently large class of manifolds (those satisfying a certain isoperimetric inequality), is Theorem 5.3 in Grigor'yan's paper \cite{Gri94}, which gives
\begin{equation}
p(x,y,t)\le\frac{c_\gamma}{t^{\frac{n}2}}e^{-\gamma Et-\gamma\frac{d(x,y)^2}{4t}-\bar c_\gamma\sqrt{E}d(x,y)},\label{GrigoryanEst}
\end{equation}
for every $\gamma\in(0,1)$. This results in a resolvent kernel estimate
$$
K_\lambda(x,y)\le c_\gamma\,2^{\frac{n}2}\left(\frac{\sqrt{\gamma E+\lambda}}{\sqrt{\gamma}r}\right)^{\frac{n}2-1}e^{-\bar c_\gamma\sqrt{E}r}\,\mathrm{K}_{\frac{n}2-1}\left(\sqrt{\gamma(\gamma E+\lambda)}\,r\right)
$$
\begin{equation}
\le c_{n,\gamma}\frac{(\lambda+\gamma E)^{\frac{n-3}4}}{r^{\frac{n-1}2}}e^{-\left[\bar c_\gamma\sqrt{E}+\sqrt{\gamma(\gamma E+\lambda)}\,\right]\, r}.\label{PostGrigoryanEst}
\end{equation}
This estimate falls short of ensuring the convergence of the integral in (\ref{IntKEq}) for two reasons. Firstly, the constant $\bar c_\gamma$ above is completely unknown, and comparing with the estimate for the hyperbolic space $\mathbb{H}^{n+1}$, we see that $\bar c_\gamma\le1$ should be expected. Secondly, in a wide class of manifolds we have $E\le\frac{\mu^2}4$, where $V(r)\sim\exp(\mu r)$, see Corollary 5.4.3 in \cite{Dav89a}, as well as Theorem 3.11 and Theorem 3.12 in the survey article \cite{Ura93}. If $\lambda>0$ is sufficiently small, then it is only in case of $\bar c_\gamma\xrightarrow[\gamma\to1-]{}1$ and $E=\frac{\mu^2}4$ that we can chose $\gamma\in(0,1)$ so that the integral in (\ref{IntKEq}) converges. And these conditions appear to be beyond our control.

Thus, even the best available pointwise upper bounds on the heat kernel for general Riemannian manifolds do not produce resolvent kernel estimates good enough to ensure the convergence of the integral (\ref{IntKEq}). To make matters worse, it turns out that even optimal heat kernel estimates need not result in optimal resolvent kernel estimates; see the discussion in Section \ref{DiscussionSection}. This clearly indicates that there is more to the decay of the resolvent kernel than what is known today.

The above discussion of the special cases, and of Grigor'yan's upper bounds, point to a hypothetical estimate of the form
\begin{equation}
K_\lambda(x,y)\le c_\lambda(r)e^{-\left[\frac\mu2+\sqrt{\frac{\mu^2}4+\lambda}\right]\,r}\label{WishfulEstimate}
\end{equation}
for $x,y\in M$ with $r=d(x,y)\gg 1$, where $c_\lambda$ is a positive rational function of $r$. Whether this kind of pointwise upper bounds hold for general manifolds seems to be very hard to check; in fact, there are grounds to believe that in full generality, pointwise upper estimates should not be expected to be very informative. See the discussion in Section \ref{DiscussionSection} below.

However, if we assume for a moment that the estimate (\ref{WishfulEstimate}) is true, then for the spherical sums (i.e., integrals over metric spheres) of the resolvent kernel we find the decay estimates
\begin{equation}
\int\limits_{d(x,y)=r}K_\lambda(x,y)dA_{r,y}(x)\le c_\lambda(r)e^{-\left[-\frac\mu2+\sqrt{\frac{\mu^2}4+\lambda}\right]\,r},\label{SphericalSumsEstimate}
\end{equation}
because the area of the sphere may grow as $\exp(\mu r)$.

The principal result of this paper is the observation, that upper bounds of the spherical sums of the kind (\ref{SphericalSumsEstimate}) \textbf{do} hold, independently of the validity of pointwise estimates (\ref{WishfulEstimate}). This is the content of Theorem \ref{MainTheorem} below. All we need to know about the resolvent kernel $K_\lambda$ is (\ref{KlambdaSmooth}), (\ref{IntKEq}) and
\begin{equation}
\Delta_xK_\lambda(x,y)=\Delta_yK_\lambda(x,y)=\lambda K_\lambda(x,y),\quad\forall(x,y)\in M\times M\setminus\operatorname{diag}(M\times M).\label{KEigenfunc}
\end{equation}

In this work we are interested only in the rates of exponential decay at infinity. Any sub-exponential contributions are of secondary interest and not optimal generally.

\section{Preliminary analysis}

\subsection{The geometric setting}

We work with an $n$-dimensional connected, smooth, non-compact, complete Riemannian manifold $(M,g)$ without boundary, $n\in\mathbb{N}+1$. In what follows, $\mathrm{vol}_g$ will stand for the metric volume, and $d(x,y)$ will be the metric distance between points $x,y\in M$. Metric balls and spheres in $M$ will be denoted by
$$
\mathcal{B}_r(x)\doteq\left\{y\in M\;\vline\quad d(x,y)<r\right\},\quad\mathcal{S}_r(x)\doteq\left\{y\in M\;\vline\quad d(x,y)=r\right\},\quad\forall x\in M.
$$
The unit tangent sphere in $\mathrm{T}_xM$ will be identified with the unit sphere in $\mathbb{R}^n$ and denoted by $\mathbb{S}$.

For every $x\in M$, the function $d(x,\cdot)$ is a Lipschitz, piecewise smooth function, and for almost every $r\in\mathbb{R}_+$, the sphere $\mathcal{S}_r(x)$ is a piecewise smooth hypersurface. This follows from the fact that the cut locus $\mathcal{C}(x)$ of a point $x\in M$ is a closed subset of measure zero (see Proposition III.3.1 in \cite{Cha06}). Whenever $\mathcal{S}_r(x)$ is piecewise smooth, its surface measure $A_{r,x}$ is given by the restriction of the $(n-1)$-form $*d\,d(x,\cdot)$, and the surface normal $N_{r,x}=\langle \nabla d(x,\cdot),\cdot\rangle$ is defined almost everywhere on $\mathcal{S}_r(x)$. For later use, we denote
\begin{equation}
\mathcal{E}(x)\doteq\left\{r\in\mathbb{R}_+\;\vline\quad A_{r,x}(\mathcal{S}_r(x)\cap\mathcal{C}(x))=0\right\},\quad\left|\mathcal{E}(x)^\complement\right|=0.\label{EDef}
\end{equation}

Often we will fix a point $y\in M$ and work in geodesic spherical coordinates as in \S III.1 of \cite{Cha06}. The Riemannian exponential map based at $y$ will be denoted by $\exp_y:U_y\to M$, where $U_y\subset\mathbb{R}_+\times\mathbb{S}$ is the domain of injectivity. The volume element has the form
$$
d\mathrm{vol}_g(x)=dA_{r,y}(x)dr,\quad dA_{r,y}(x)=\rho_y(r,\varphi)d\varphi,
$$
where $r=d(y,\exp_y(r,\varphi))$ is the radial coordinate and $d\varphi$ is the standard measure of the Euclidean unit $(n-1)$-sphere $\mathbb{S}$. For every $\varphi\in\mathbb{S}$, denote by $c_y(\varphi)\in(0,+\infty]$ the first cut point in the direction $\varphi$, so that $\rho_y(r,\varphi)>0$ for all $r\in(0,c_y(\varphi))$.

\subsection{The decay of the spherical sums of eigenfunctions}

We continue working in the geodesic spherical coordinates based at an arbitrarily fixed point $y\in M$. If $r_0\in[0,+\infty)$ and $\psi\in C(M\setminus\overline{\mathcal{B}_{r_0}(y)},\mathbb{R}_+)\cap L^1(M\setminus\mathcal{B}_{r_0}(y))$ then denote by $\Psi_y:[r_0,+\infty)\to\mathbb{R}_-$ the function
$$
\Psi_y(r)\doteq-\int\limits_{M\setminus\mathcal{B}_r(y)}\psi(x)d\mathrm{vol}_g(x),\quad\forall r\in[r_0,+\infty).
$$
Thus, $\Psi_y$ is the (negative) extraglobular sum of $\psi$ at $r\in[r_0,+\infty)$ around $y$. Obviously, $\Psi_y$ is a negative, continuous, non-decreasing function with
\begin{equation}
\Psi_y(r_0)=-\|\psi\|_{L^1(M\setminus\mathcal{B}_{r_0}(y))},\quad\lim_{r\to+\infty}\Psi(r)=0.\label{PsiLimits}
\end{equation}
It follows from Proposition III.5.1 in \cite{Cha06} that
$$
\Psi_y(r)=-\int\limits_r^{+\infty}\int\limits_{\mathbb{S}\,\cap\frac1tU_y}\psi(\exp_y(t,\varphi))\rho_y(t,\varphi)d\varphi dt,\quad\forall r\in[r_0,+\infty).
$$
$\Psi_y$ is a locally Lipschitz function, and its almost everywhere derivative is denoted by
\begin{equation}
\bar\psi_y\in L^\infty_\mathrm{loc}((r_0,+\infty))\cap L^1((r_0,+\infty)),\label{barpsiProp}
\end{equation}
$$
\bar\psi_y(r)=\frac{d\Psi_y}{dr}(r),\quad\mbox{a.e.}\;r\in[r_0,+\infty).
$$
It is clear that
$$
\bar\psi_y(r)=\int\limits_{\mathcal{S}_r(y)}\psi(x)dA_{r,y}(x)=\int\limits_{\mathbb{S}\,\cap\frac1rU_y}\psi(\exp_y(r,\varphi))\rho_y(r,\varphi)d\varphi,\quad\forall r\in\mathcal{E}(y),
$$
where $\mathcal{E}(y)$ is as in (\ref{EDef}). Therefore, $\bar\psi_y(r)$ is the spherical sum of $\psi$ at $r\in\mathcal{E}(y)$ around $y$.

Our main instrument is the following fact: under natural assumptions, if $\psi$ is a positive integrable eigenfunction of the Laplace-Beltrami operator outside a metric ball, then its extraglobular sums $\Psi_y$ weakly satisfy a second order differential inequality.

\begin{theorem}\label{DiffIneqTheorem} Let $y\in M$, $r_0\in[0,+\infty)$ and
$$
\psi\in C^\infty(M\setminus\overline{\mathcal{B}_{r_0}(y)}\,,\mathbb{R}_+)\cap L^1(M\setminus\mathcal{B}_{r_0}(y))
$$
be a positive integrable eigenfunction of the Laplace-Beltrami operator $\Delta$ for an eigenvalue $\lambda\in\mathbb{R}$,
$$
\Delta\psi(x)=\lambda\psi(x),\quad\forall x\in M\setminus\overline{\mathcal{B}_{r_0}(y)}.
$$
Suppose that the flow of $\nabla\psi$ out of spheres $\mathcal{S}_r(y)$ is asymptotically not strictly positive as $r\to+\infty$,
\begin{equation}
\liminf_{\overset{\scriptstyle r\to+\infty}{r\in\mathcal{E}(y)}}\int\limits_{\mathcal{S}_r(y)}\langle\nabla\psi(x),N_{r,y}(x)\rangle dA_{r,y}(x)\le0.\label{liminfCond}
\end{equation}
If $\mu:(r_0,+\infty)\to\mathbb{R}$ is a measurable function that dominates the spherical mean curvature,
$$
\frac{\partial\ln\rho_y(r,\varphi)}{\partial r}\le\mu(r),\quad\forall r\in(r_0,c_y(\varphi)),\quad\forall\varphi\in\mathbb{S},
$$
then we have
$$
-\Psi_y''+\mu\Psi_y'+\lambda\Psi_y\ge0
$$
in the sense of distributions $C^\infty_c((r_0,+\infty))'$.
\end{theorem}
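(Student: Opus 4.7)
My plan is to unfold the distributional inequality by pairing with an arbitrary non-negative test function $\chi \in C^\infty_c((r_0,+\infty))$. Since $\Psi_y$ is locally Lipschitz with $\Psi_y' = \bar\psi_y$ almost everywhere by (\ref{barpsiProp}), the statement reduces, after one distributional integration by parts on $\Psi_y''$, to verifying
$$
\int_{r_0}^{+\infty} \chi'(r) \bar\psi_y(r) \, dr + \int_{r_0}^{+\infty} \mu(r)\, \bar\psi_y(r)\, \chi(r) \, dr + \lambda \int_{r_0}^{+\infty} \Psi_y(r)\, \chi(r) \, dr \geq 0.
$$
I would assemble this from two ingredients: a flux balance on annuli derived from the divergence theorem and the eigenfunction equation, and a direct manipulation of $\int \chi' \bar\psi_y$ in geodesic spherical coordinates.

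For the flux balance, I apply the divergence theorem to $\nabla\psi$ on $\mathcal{B}_R(y) \setminus \overline{\mathcal{B}_r(y)}$ for $r_0 < r < R$ with $r,R \in \mathcal{E}(y)$. Since $\mathcal{C}(y)$ is a null set and the boundary spheres are piecewise smooth off it, this yields
$$
\Phi(r) = \Phi(R) - \lambda \int_{\mathcal{B}_R(y) \setminus \mathcal{B}_r(y)} \psi \, d\mathrm{vol}_g, \qquad \Phi(r) \doteq \int_{\mathcal{S}_r(y)} \langle \nabla\psi, N_{r,y}\rangle \, dA_{r,y}.
$$
Choosing a sequence $R_k \to +\infty$ in $\mathcal{E}(y)$ that realises the liminf in (\ref{liminfCond}) and passing to the limit via monotone convergence on the volume integral (using integrability of $\psi$), hypothesis (\ref{liminfCond}) delivers the pointwise bound $\Phi(r) \le \lambda \Psi_y(r)$ for almost every $r \in (r_0,+\infty)$.

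For the second ingredient, Fubini combined with the spherical representation of $\bar\psi_y$ recorded in the paper gives
$$
\int_{r_0}^{+\infty} \chi'(r) \bar\psi_y(r) \, dr = \int_{\mathbb{S}} \int_{r_0}^{c_y(\varphi)} \chi'(r)\, \psi(\exp_y(r,\varphi))\, \rho_y(r,\varphi) \, dr \, d\varphi,
$$
and I integrate by parts in $r$ for each fixed $\varphi$. The boundary at $r_0$ vanishes because $\chi$ is compactly supported in $(r_0,+\infty)$; the boundary at $r = c_y(\varphi)$, when finite and in $\operatorname{supp}\chi$, equals $\chi(c_y(\varphi))\, \psi\, \rho_y|_{c_y(\varphi)^-}$, which is non-negative by positivity of the three factors and so can be dropped while preserving the inequality. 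The bulk produces the two terms $-\chi (\partial_r \psi)\rho_y$ and $-\chi \psi (\partial_r \rho_y)$; the curvature bound $\partial_r \ln \rho_y \le \mu(r)$ together with $\chi\psi\rho_y \ge 0$ dominates the second from below by $-\chi(r)\mu(r)\psi\rho_y$. Reassembling gives
$$
\int_{r_0}^{+\infty} \chi'(r)\bar\psi_y(r)\,dr \;\ge\; -\int_{r_0}^{+\infty} \chi(r)\Phi(r)\,dr \;-\; \int_{r_0}^{+\infty} \chi(r)\mu(r)\bar\psi_y(r)\,dr,
$$
and substituting the bound $\Phi \le \lambda \Psi_y$ from the first step (using $\chi \ge 0$) yields exactly the target inequality.

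The single delicate point throughout is the management of the cut locus: applying the divergence theorem on the annulus, and integrating by parts in $r$ up to $c_y(\varphi)$ where $\rho_y$ may either vanish or remain positive. What makes this go through is that $\mathcal{C}(y)$ is measure zero in the bulk and that the hypothesis $\psi > 0$, together with $\rho_y \ge 0$, forces every radial-endpoint contribution to carry the favourable sign, so it can be discarded without reversing the inequality.
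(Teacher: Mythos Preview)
Your proposal is correct and follows essentially the same route as the paper's proof: both arguments establish the flux bound $\Phi(r)\le\lambda\Psi_y(r)$ via the divergence theorem on annuli combined with condition (\ref{liminfCond}), then pair with a non-negative test function, integrate by parts in the radial variable within geodesic spherical coordinates, discard the non-negative boundary contribution at $r=c_y(\varphi)$, and close with the curvature bound $\partial_r\ln\rho_y\le\mu$. The only cosmetic difference is that the paper first isolates the constancy of $\Phi(r)-\lambda\Psi_y(r)$ before evaluating it along the sequence realising the $\liminf$, whereas you pass to the limit directly; the content is identical.
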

\begin{proof} Fix $y\in M$ and $r_0\in[0,+\infty)$. By definition, for every $r\in\mathcal{E}(y)$, the sphere $\mathcal{S}_r(y)=\partial\mathcal{B}_r(y)$ is a Lipschitz, almost everywhere smooth surface, which allows us to apply the divergence theorem. Namely, for $\forall s,r\in\mathcal{E}(y)$ with $s<r$,
$$
\lambda(\Psi_y(r)-\Psi_y(s))=\int\limits_{\mathcal{B}_r(y)\setminus\mathcal{B}_s(y)}\lambda\psi(x)d\mathrm{vol}_g(x)=\int\limits_{\mathcal{B}_r(y)\setminus\mathcal{B}_s(y)}\Delta\psi(x)d\mathrm{vol}_g(x)
$$
$$
=\int\limits_{\mathcal{S}_r(y)}\langle\nabla\psi(x),N_{r,y}(x)\rangle dA_{r,y}(x)-\int\limits_{\mathcal{S}_s(y)}\langle\nabla\psi(x),N_{s,y}(x)\rangle dA_{s,y}(x).
$$
This implies that
\begin{equation}
c\doteq\int\limits_{\mathcal{S}_r(y)}\langle\nabla\psi(x),N_{r,y}(x)\rangle dA_{r,y}(x)-\lambda\Psi_y(r)=\mathrm{const},\quad\forall r\in\mathcal{E}(y).\label{PsiGradConst}
\end{equation}
By condition (\ref{liminfCond}), we find a sequence $\{r_k\}_{k=1}^\infty\subset\mathcal{E}(y)$ such that $r_k\xrightarrow[k\to\infty]{}+\infty$ and
$$
c=\lim_{k\to\infty}\left[\,\int\limits_{\mathcal{S}_{r_k}(y)}\langle\nabla\psi(x),N_{r_k,y}(x)\rangle dA_{r_k,y}(x)-\lambda\Psi_y(r_k)\right]\le0,
$$
where we used (\ref{PsiLimits}).

Let us now compute the weak derivative $\Psi_y''=\bar\psi_y'$ in the sense of distributions $C^\infty_c((r_0,+\infty))'$. Take $f\in C^\infty_c((r_0,+\infty))\hookrightarrow C_0([0,+\infty])$, $f\ge0$, and write
$$
\bar\psi_y'(f)=-\int\limits_0^{+\infty}\bar\psi_y(r)f'(r)dr=-\int\limits_0^{+\infty}\int\limits_{\mathbb{S}\,\cap\frac1rU_y}\psi(\exp_y(r,\varphi))\rho_y(r,\varphi)f'(r)d\varphi dr
$$
$$
=-\int\limits_{\mathbb{S}}\int\limits_0^{c_y(\varphi)}\psi(\exp_y(r,\varphi))\rho_y(r,\varphi)f'(r)drd\varphi
$$
$$
=-\int\limits_{\mathbb{S}}\biggl[\psi(\exp_y(c_y(\varphi),\varphi))\rho_y(c_y(\varphi),\varphi)f(c_y(\varphi))-\int\limits_0^{c_y(\varphi)}\frac{\partial\psi}{\partial r}(\exp_y(r,\varphi))\rho_y(r,\varphi)f(r)dr
$$
$$
-\int\limits_0^{c_y(\varphi)}\psi(\exp_y(r,\varphi))\frac{\partial\rho_y}{\partial r}(r,\varphi)f(r)dr\biggr]d\varphi
$$
\begin{equation}
\le\int\limits_{\mathbb{S}}\int\limits_0^{c_y(\varphi)}\frac{\partial\psi}{\partial r}(\exp_y(r,\varphi))\rho_y(r,\varphi)f(r)drd\varphi+\int\limits_{\mathbb{S}}\int\limits_0^{c_y(\varphi)}\psi(\exp_y(r,\varphi))\frac{\partial\rho_y}{\partial r}(r,\varphi)f(r)drd\varphi,\label{Psi''Eq}
\end{equation}
where evaluation at $c_y(\varphi)$ is understood in the sense of the limit $r\to c_y(\varphi)-$. Note that
$$
\int\limits_{\mathbb{S}}\int\limits_0^{c_y(\varphi)}\frac{\partial\psi}{\partial r}(\exp_y(r,\varphi))\rho_y(r,\varphi)f(r)drd\varphi=\int\limits_0^{+\infty}f(r)\int\limits_{\mathbb{S}\,\cap\frac1rU_y}\frac{\partial\psi}{\partial r}(\exp_y(r,\varphi))\rho_y(r,\varphi)d\varphi dr
$$
\begin{equation}
=\int\limits_0^{+\infty}f(r)\int\limits_{\mathcal{S}_r(y)}\langle\nabla\psi(x),N_{r,y}(x)\rangle dA_{r,y}(x)dr\le\lambda\int\limits_0^{+\infty}f(r)\Psi_y(r)dr,\label{Part1Eq}
\end{equation}
where we used (\ref{PsiGradConst}) with $c\le0$. On the other hand,
$$
\int\limits_{\mathbb{S}}\int\limits_0^{c_y(\varphi)}\psi(\exp_y(r,\varphi))\frac{\partial\rho_y}{\partial r}(r,\varphi)f(r)drd\varphi=\int\limits_0^{+\infty}f(r)\int\limits_{\mathbb{S}\,\cap\frac1rU_y}\psi(\exp_y(r,\varphi))\frac{\partial\rho_y}{\partial r}(r,\varphi)d\varphi dr
$$
\begin{equation}
\le\int\limits_0^{+\infty}f(r)\mu(r)\int\limits_{\mathbb{S}\,\cap\frac1rU_y}\psi(\exp_y(r,\varphi))\rho_y(\exp_y(r,\varphi))d\varphi dr=\int\limits_0^{+\infty}f(r)\mu(r)\bar\psi_y(r)dr.\label{Part2Eq}
\end{equation}
Combining (\ref{Psi''Eq}), (\ref{Part1Eq}) and (\ref{Part2Eq}), we arrive at
$$
\Psi_y''(f)\le\lambda\Psi_y(f)+\mu\Psi_y'(f).
$$
Since $f\ge0$ was arbitrary, this implies
$$
-\Psi_y''+\mu\Psi_y'+\lambda\Psi_y\ge0
$$
in the sense of $C^\infty_c((r_0,+\infty))'$, as desired.
\end{proof}

Next we will show that the differential inequality obtained in Theorem \ref{DiffIneqTheorem} implies decay estimates.

\begin{proposition}\label{MainProp} Let $\Psi:[r_0,+\infty)\to\mathbb{R}_-$, $r_0\in\mathbb{R}$, be a locally Lipschitz, non-decreasing function with
$$
\Psi(r)\xrightarrow[r\to+\infty]{}0.
$$
Let further $\mu:[r_0,+\infty)\to\mathbb{R}$ be a non-increasing function.
Suppose that
\begin{equation}
-\Psi''+\mu\Psi'+\lambda\Psi\ge0\label{MainPropMainEq}
\end{equation}
in the sense of distributions $C_c^\infty((r_0,+\infty))'$ for some $\lambda>0$. Then
$$
\Psi'(r)\le\left[\operatorname{ess}\liminf_{t\to r_0+}\Psi'(t)+|\Psi(r_0)|\left(\frac{\mu(r_0)}2+\sqrt{\frac{\mu(r_0)^2}4+\lambda}\right)\right]
$$
$$
\times\; e^{-\int\limits_{r_0}^r\left[-\frac{\mu(s)}2+\sqrt{\frac{\mu(s)^2}4+\lambda}\,\right]ds},\quad\mbox{a.e.}\;r\in[r_0,+\infty).
$$
\end{proposition}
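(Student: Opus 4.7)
The plan is to factor the linear operator $-\partial^2+\mu\partial+\lambda$ via its pointwise characteristic roots
\[
\alpha_\pm(r):=\frac{\mu(r)}{2}\pm\sqrt{\frac{\mu(r)^2}{4}+\lambda},\qquad\alpha_++\alpha_-=\mu,\qquad\alpha_+\alpha_-=-\lambda,
\]
with $\alpha_+>0>\alpha_-$. An elementary calculation shows that both $\alpha_\pm$ are strictly increasing functions of $\mu$ (for fixed $\lambda>0$); since $\mu$ is non-increasing, this makes $\alpha_\pm$ non-increasing in $r$, hence locally of bounded variation, with non-positive distributional derivatives $\alpha_\pm'$.

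First I would introduce the auxiliary function $v:=\Psi'-\alpha_+\Psi$, defined a.e.\ on $[r_0,+\infty)$. The signs conspire: $\Psi'\ge0$ a.e.\ (since $\Psi$ is locally Lipschitz and non-decreasing), $\alpha_+>0$, $\Psi\le0$, so $v\ge0$ a.e. Using the product rule for a BV function times a Lipschitz function---valid in the sense of Radon measures since $\Psi$ is continuous and so shares no common jumps with $\alpha_+$---one computes
\[
v'-\alpha_-v=(\Psi''-\mu\Psi'-\lambda\Psi)-\alpha_+'\Psi
\]
on $(r_0,+\infty)$. The first bracket is $\le0$ by hypothesis (\ref{MainPropMainEq}); the remaining term $-\alpha_+'\Psi$ is the product of the non-negative Radon measure $-\alpha_+'$ with the non-positive continuous function $\Psi$, hence a non-positive measure. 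Altogether $v'-\alpha_-v\le0$ as a Radon measure.

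Next I would multiply by the integrating factor $B(r):=\exp\bigl(-\int_{r_0}^r\alpha_-(s)\,ds\bigr)$, which is locally Lipschitz, strictly positive, and satisfies $B'=-\alpha_- B$ a.e. A second application of the product rule gives $(Bv)'=B(v'-\alpha_-v)\le0$, so $Bv$ admits a non-increasing representative $w$ on $[r_0,+\infty)$ and, for a.e.\ $r>r_0$,
\[
B(r)v(r)=w(r)\le w(r_0+)=\operatorname{ess}\liminf_{s\to r_0+}\Psi'(s)-\alpha_+(r_0+)\Psi(r_0),
\]
using $B(r_0)=1$, continuity of $\Psi$ at $r_0$, and the existence of the right-limit $\alpha_+(r_0+)$. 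The monotonicity bound $\alpha_+(r_0+)\le\alpha_+(r_0)$ combined with $\Psi(r_0)\le0$ replaces $\alpha_+(r_0+)$ by $\alpha_+(r_0)$ on the right, and the a.e.\ inequality $\Psi'(r)\le v(r)$ (valid because $\alpha_+(r)\Psi(r)\le0$), together with
\[
B(r)^{-1}=\exp\!\left(-\int_{r_0}^r\!\!\left[-\frac{\mu(s)}{2}+\sqrt{\frac{\mu(s)^2}{4}+\lambda}\,\right]ds\right),
\]
then yields the stated estimate.

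The main technical point will be the measure-theoretic bookkeeping. Because $\mu$ is only non-increasing (not smooth), the roots $\alpha_\pm$ are merely BV and the two applications of the product rule above must be interpreted in the sense of Radon measures; their validity rests on the fact that $\Psi$ and $B$ are continuous (indeed Lipschitz), so they share no common jumps with $\alpha_\pm$. Once this is verified, the rest is a standard Gr\"onwall-style integration.
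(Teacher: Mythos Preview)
Your argument is correct and takes a genuinely different route from the paper's.

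The paper does \emph{not} factor through the pointwise roots. Instead, it introduces an auxiliary Riccati equation $\beta'=\beta^2+\mu\beta-\lambda=(\beta-\beta_-)(\beta-\beta_+)$ with $\beta_\pm=-\mu/2\pm\sqrt{\mu^2/4+\lambda}$, invokes Carath\'eodory's existence theorem to produce a locally absolutely continuous solution $\beta$ with $\beta(r_0)=\bar\beta_-(r_0)$, and then proves by a comparison argument that $\beta\le\bar\beta_-$ globally and that the solution extends to all of $[r_0,\infty)$. The substitution $H=\Psi\,e^{\int\beta}$ then gives an \emph{exact} first-order reduction $-H''+(2\beta+\mu)H'\ge0$ (no remainder, because $\beta$ solves the Riccati equation), from which the estimate follows by a single integration.

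Your approach trades the ODE machinery for a sign observation: writing $v=(\partial-\alpha_+)\Psi$ and computing $(\partial-\alpha_-)v$, the variable-coefficient factoring produces the extra term $-\alpha_+'\Psi$, and you notice that monotonicity of $\mu$ forces $\alpha_+'\le0$, while $\Psi\le0$, so the remainder has the right sign. This is more elementary---no Carath\'eodory, no comparison lemma---at the price of handling the BV product rules carefully (which you flag correctly: $\Psi$ and $B$ are continuous, so no shared jumps with $\alpha_\pm$; and the hypothesis makes $-\Psi''+\mu\Psi'+\lambda\Psi$ a non-negative Radon measure, so $\Psi''$ and hence $v'$ are measures). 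The paper's route, by contrast, buys an absolutely continuous integrating factor $e^{\int\beta}$ and avoids BV calculus entirely, working instead in $W^{-1,\infty}_{\mathrm{loc}}$. Both reach the same bound; yours is shorter once the BV bookkeeping is accepted.
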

\begin{proof} Consider the ordinary differential equation
\begin{equation}
\beta'=\beta^2+\mu\beta-\lambda=(\beta-\beta_-)(\beta-\beta_+)\label{BetaEq}
\end{equation}
for a locally absolutely continuous function $\beta:[r_0,+\infty)\to\mathbb{R}$. Here $\beta_\pm:[r_0,+\infty)\to\mathbb{R}_\pm$ are monotone non-decreasing functions given by
$$
\beta_\pm(r)\doteq-\frac{\mu(r)}2\pm\sqrt{\frac{\mu(r)^2}4+\lambda},\quad\forall r\in[r_0,+\infty).
$$
Let $\bar\beta_-:[r_0,+\infty)\to\mathbb{R}_-$ be the right-continuous (upper semi-continuous) modification of $\beta_-$,
$$
\bar\beta_-(r)\doteq\limsup_{t\to r}\beta_-(t),\quad\forall r\in[r_0,+\infty).
$$
It is clear that $\bar\beta_-(r)=\beta_-(r)$ for all but countably many $r\in[r_0,+\infty)$. Since the equation (\ref{BetaEq}) was originally to be satisfied almost everywhere, one can replace $\beta_-$ by $\bar\beta_-$ in the equation without changing the solutions.

Choose the initial condition $\beta(r_0)=\bar\beta_-(r_0)$. By Carath\'eodory's theorem (see Theorem 5.1 and Theorem 5.2 in Chapter I of \cite{Hal80}), there exists a locally absolutely continuous solution $\beta$ of the equation (\ref{BetaEq}) with this initial condition on a maximal interval $[r_0,a)$, $a\in(r_0,+\infty]$. Firstly, let us prove that
\begin{equation}
\beta(r)\le\bar\beta_-(r),\quad\forall r\in[r_0,a).\label{beta<beta-}
\end{equation}
Assume towards a contradiction that the set $Z\doteq[\beta-\bar\beta_-]^{-1}(\mathbb{R}_+)$ is not empty. Since $\beta$ is continuous and $-\bar\beta_-$ is lower semi-continuous, we have that $\beta-\bar\beta_-$ is lower semi-continuous, and $Z=[\beta-\bar\beta_-]^{-1}(\mathbb{R}_+)\subset[r_0,a)$ is an open subset. Let $b\doteq\inf Z$, so that $(b,c)\subset Z$. Then $\beta(b)\le\bar\beta_-(b)$, or else we would have $b\in Z$. On the other hand, by continuity, there exists $d\in(b,c)$ such that
$$
\bar\beta_-(r)<\beta(r)<\beta_+(b)\le\beta_+(r),\quad\forall r\in(b,d).
$$
It follows that
$$
\beta'(r)=(\beta(r)-\bar\beta_-(r))(\beta(r)-\beta_+(r))<0,\quad\mbox{a.e.}\;r\in(b,d),
$$
implying that
$$
\beta(d)=\beta(b)+\int\limits_b^d\beta'(r)ds<\beta(b)\le\bar\beta_-(b)\le\bar\beta(d),
$$
which is in contradiction with $d\in Z$. This contradiction proves the inequality (\ref{beta<beta-}). Moreover, it shows also that
$$
\beta'(r)=(\beta(r)-\bar\beta_-(r))(\beta(r)-\beta_+(r))\ge0,\quad\mbox{a.e.}\;r\in[r_0,a),
$$
whence
$$
\beta(r)=\beta_0+\int\limits_{r_0}^r\beta'(s)ds\ge\beta_0,\quad\forall r\in[r_0,a).
$$
If $a<+\infty$ then
$$
\beta_0\le\lim\limits_{r\to a-}\beta(r)\le\lim\limits_{r\to a-}\bar\beta_-(r)\le\bar\beta_-(a),
$$
which contradicts the maximality of the existence interval $[r_0,a)$ (apply Theorem 5.2 in Chapter I of \cite{Hal80} with $D=(r_0,+\infty)\times\mathbb{R}$). Thus, we establish that $a=+\infty$, and the solution $\beta$ is well defined on $[r_0,+\infty)$. We restate for the later use that
\begin{equation}
\beta(r)\le\bar\beta_-(r)=\beta_-(r)=-\frac{\mu(r)}2-\sqrt{\frac{\mu(r)^2}4+\lambda},\quad\mbox{a.e.}\; r\in[r_0,+\infty).\label{BetaIneq}
\end{equation}

Now define the locally Lipschitz function $H:[r_0,+\infty)\to\mathbb{R}_-$ by
$$
H(r)=\Psi(r)\,e^{\;\int\limits_{r_0}^r\beta(s)ds},\quad\forall r\in[r_0,+\infty).
$$
It follows from (\ref{MainPropMainEq}) and (\ref{BetaEq}) that
$$
-H''+(2\beta+\mu)H'=e^{\,\int\limits_{r_0}\beta(s)ds}\left[-\Psi''+\mu\Psi'+\lambda\Psi\right]+\Psi e^{\,\int\limits_{r_0}\beta(s)ds}\left[-\beta'+\beta^2+\mu\beta-\lambda\right]
$$
$$
=e^{\,\int\limits_{r_0}\beta(s)ds}\left[-\Psi''+\mu\Psi'+\lambda\Psi\right],
$$
understood in the space $\mathrm{W}^{-1,\infty}_\mathrm{loc}((r_0,+\infty))$. Since $C^\infty_c((r_0,+\infty))\subset\mathrm{W}^{1,1}_c((r_0,+\infty))$ is dense, the inequality (\ref{MainPropMainEq}) is true also in $\mathrm{W}^{-1,\infty}_\mathrm{loc}((r_0,+\infty))$, whence
$$
-H''+(2\beta+\mu)H'\ge0
$$
in the sense of $\mathrm{W}^{-1,\infty}_\mathrm{loc}((r_0,+\infty))$. Note that
$$
-\left[H'e^{-\int\limits_{r_0}[2\beta(s)+\mu(s)]ds}\right]'=e^{-\int\limits_{r_0}[2\beta(s)+\mu(s)]}\left[-H''+(2\beta+\mu)H'\right]\ge0
$$
in the sense of $\mathrm{W}^{-1,\infty}_\mathrm{loc}((r_0,+\infty))$. Thus,
$$
-\left[H'e^{-\int\limits_{r_0}[2\beta(s)+\mu(s)]ds}\right]'
$$
is a Radon measure (see Theorem 2.1.7 in \cite{HorI}), and hence
$$
H'e^{-\int\limits_{r_0}[2\beta(s)+\mu(s)]ds}
$$
equals almost everywhere a monotone non-increasing function. This implies
$$
H'(r)\le\operatorname{ess}\liminf_{t\to r_0+}H'(t)\cdot e^{\;\int\limits_{r_0}^r\left[2\beta(s)+\mu(s)\right]ds},\quad\mbox{a.e.}\;r\in[r_0,+\infty).
$$
In particular, since we have
$$
H'(r)=\Psi'(r)\,e^{\;\int\limits_{r_0}^r\beta(s)ds}+\beta(r)\Psi(r)\,e^{\;\int\limits_{r_0}^r\beta(s)ds},\quad\mbox{a.e.}\;r\in[r_0,+\infty),
$$
it follows that (recall that $\beta\Psi>0$)
$$
\Psi'(r)\le\left[\operatorname{ess}\liminf_{t\to r_0+}\Psi'(t)+\beta_0\Psi(r_0)\right]\,e^{\;\int\limits_{r_0}^r\left[\beta(s)+\mu(s)\right]ds}
$$
$$
\le\left[\operatorname{ess}\liminf_{t\to r_0+}\Psi'(t)+|\Psi(r_0)|\left(\frac{\mu(r_0)}2+\sqrt{\frac{\mu(r_0)^2}4+\lambda}\right)\right]\,e^{-\int\limits_{r_0}^r\left[-\frac{\mu(s)}2+\sqrt{\frac{\mu(s)^2}4+\lambda}\right]ds},
$$
$$
\mbox{a.e.}\;r\in[r_0,+\infty),
$$
where we used (\ref{BetaIneq}) in the last step. The proof is complete.
\end{proof}

\section{Main results}

In this chapter, we begin deriving our main results. Throughout the chapter, we will maintain the following assumptions about the Riemannian manifold $(M,g)$.

\begin{assumptions}\label{Assumptions} We suppose that the Riemannian manifold $(M,g)$ is such that:
\begin{itemize}

\item[(i)] $(M,g)$ is complete and non-compact;

\item[(ii)] the Ricci curvature is bounded from below,
$$
\mathrm{Ric}\ge-(n-1)\kappa,\quad\kappa\in[0,+\infty).
$$

\end{itemize}
\end{assumptions}

It is worth noting that a manifold of bounded geometry automatically satisfies the Assumptions \ref{Assumptions}:(ii) .

\subsection{An upper bound for the spherical sums of the resolvent kernel}

As the reader may have already expected, the upper bound for the spherical sums of the resolvent kernel will be derived by combining Theorem \ref{DiffIneqTheorem} and Proposition \ref{MainProp}. Note that Theorem \ref{DiffIneqTheorem} requires a function $\mu$ that dominates the spherical mean curvature $\partial_r\ln\rho_y$, whereas Proposition \ref{MainProp} works with a function $\mu$ that is monotone non-increasing. These two requirements converge into the following definition.

Let us define for every $y\in M$ the monotone non-increasing function $\mu_y:\mathbb{R}_+\to\mathbb{R}$ by
\begin{equation}
\mu_y(r)\doteq\sup_{t\in[r,+\infty)}\sup_{\varphi\in\mathbb{S}\,\cap\frac1tU_y}\frac{\partial\ln\rho_y(t,\varphi)}{\partial t},\quad\forall r\in\mathbb{R}_+.\label{VolUpperBound}
\end{equation}
Note that since $M$ is not compact, $\mathbb{S}\cap\frac1tU_y\neq\emptyset$ for all $t\in\mathbb{R}_+$. By Bishop's theorem (Theorem III.4.3 in \cite{Cha06}), we have due to Assumptions \ref{Assumptions}:(ii) that
\begin{equation}
\mu_y(r)\le\frac{(n-1)\sqrt{\kappa}}{\tanh(\sqrt{\kappa}r)},\quad\forall r\in\mathbb{R}_+,\label{muComparison}
\end{equation}
with the understanding that, for $\kappa=0$, the limit is taken as $\kappa\to0+$ to yield $\frac{n-1}r$ on the right-hand side.

We retain all notations established in the previous sections. Let us begin with a small but very useful lemma.

\begin{lemma}\label{SalvationLemma} For every $y\in M$ and $\lambda\in\mathbb{R}_+$, we have
$$
\int\limits_{\mathcal{S}_r(y)}\langle\nabla_xK_\lambda(x,y),N_{r,y}(x)\rangle dA_{r,y}(x)<0,\quad\forall r\in\mathcal{E}(y).
$$
\end{lemma}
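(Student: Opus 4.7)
The plan is to exploit the heat kernel representation (\ref{KpRel}) together with stochasticity. The goal is the explicit identity
\begin{equation*}
\int_{\mathcal{S}_r(y)} \langle \nabla_x K_\lambda(x,y), N_{r,y}(x)\rangle dA_{r,y}(x) = \lambda \Psi_y(r),
\end{equation*}
from which strict negativity follows at once: $K_\lambda>0$ and $M\setminus\mathcal{B}_r(y)$ has positive volume by non-compactness, whence $\Psi_y(r)<0$.

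First I would substitute (\ref{KpRel}) into the flux integral and interchange orders of integration by Fubini--Tonelli, legitimate thanks to the smoothness of $p$ and the exponential weight $e^{-\lambda t}$. For $r\in\mathcal{E}(y)$ the ball $\mathcal{B}_r(y)$ has piecewise smooth boundary, so applying the divergence theorem to $\nabla_x p(\cdot,y,t)$ and using the heat equation $\Delta_x p=\partial_t p$ converts the inner spherical flux into $F'(t)$, where $F(t)\doteq\int_{\mathcal{B}_r(y)}p(x,y,t)d\mathrm{vol}_g(x)$. Thus the flux of interest equals $\int_0^\infty e^{-\lambda t}F'(t)dt$.

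Integrating by parts in $t$, the stochasticity bound $F(t)\le 1$ kills the upper boundary term, while short-time concentration of the heat kernel at $y\in\mathcal{B}_r(y)$ forces $F(0^+)=1$, contributing $-1$ at the lower end. A second Fubini converts the remaining $\lambda\int_0^\infty e^{-\lambda t}F(t)dt$ into $\lambda\int_{\mathcal{B}_r(y)}K_\lambda d\mathrm{vol}_g$, which by (\ref{IntKEq}) equals $1+\lambda\Psi_y(r)$. The two contributions collapse to exactly $\lambda\Psi_y(r)$.

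The main obstacle is the lower boundary value $F(0^+)=1$. A clean sandwich handles it: approximate $\mathbf{1}_{\mathcal{B}_r(y)}$ from below by continuous compactly supported cutoffs $\phi_\epsilon$, so that $\int_M\phi_\epsilon(x)p(x,y,t)d\mathrm{vol}_g(x)\to\phi_\epsilon(y)$ as $t\to 0^+$ by standard short-time properties of the heat kernel, yielding $\liminf_{t\to 0^+}F(t)\ge 1$; the matching upper bound $F(t)\le 1$ is stochasticity. As a fallback relying purely on (\ref{KlambdaSmooth}), (\ref{IntKEq}) and (\ref{KEigenfunc}), one can instead apply the divergence theorem on the annulus $\mathcal{B}_r(y)\setminus\mathcal{B}_s(y)$ and pass $s\to 0^+$, computing the residue limit $\int_{\mathcal{S}_s(y)}\langle\nabla_x K_\lambda,N_{s,y}\rangle dA_{s,y}\to -1$ from the Newtonian-type singularity of $K_\lambda$ on the diagonal.
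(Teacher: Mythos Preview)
Your approach is essentially the paper's: heat-kernel representation (\ref{KpRel}), divergence theorem plus the heat equation to rewrite the flux as $\int_0^\infty e^{-\lambda t}F'(t)\,dt$ with $F(t)=\int_{\mathcal{B}_r(y)}p$, then integration by parts with $F(0^+)=1$ obtained by the same cutoff-sandwich against stochasticity. You in fact extract slightly more than the paper, namely the exact identity $\text{flux}=\lambda\Psi_y(r)$; the paper stops at the strict inequality, but your identity is equivalent to showing that the constant $c$ in (\ref{PsiGradConst}) is exactly zero rather than merely $\le 0$. The one place where the paper is more careful: interchanging $\nabla_x$ with $\int_0^\infty e^{-\lambda t}(\cdot)\,dt$ is not justified by ``smoothness of $p$ and the exponential weight'' alone---one needs integrable-in-$t$ control of $|\nabla_x p(x,y,t)|$ away from the diagonal, and the paper supplies this via Davies' uniform gradient bounds (Theorem~6 in \cite{Dav89}).
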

\begin{proof} Let $y\in M$ and $\lambda,r_0\in\mathbb{R}_+$ be given. We use the relation (\ref{KpRel}) to express the resolvent kernel in terms of the heat kernel,
$$
\nabla_xK_\lambda(x,y)=\nabla_x\int\limits_0^{+\infty}e^{-\lambda t}p(x,y,t)dt
$$
$$
=\int\limits_0^{+\infty}e^{-\lambda t}\nabla_xp(x,y,t)dt,\quad\forall x\in M\setminus\mathcal{B}_{r_0}(y),
$$
where the second equality is justified by the fact that the integral on the second line converges uniformly for $d(x,y)\ge r_0$, due to Theorem 6 in \cite{Dav89}. Consequently, we have
$$
\int\limits_{\mathcal{S}_r(y)}\langle\nabla_xK_\lambda(x,y),N_{r,y}(x)\rangle dA_{r,y}(x)=\int\limits_{\mathcal{S}_r(y)}\int\limits_0^{+\infty}e^{-\lambda t}\langle\nabla_xp(x,y,t),N_{r,y}(x)\rangle dtdA_{r,y}(x)
$$
\begin{equation}
=\int\limits_0^{+\infty}e^{-\lambda t}\int\limits_{\mathcal{S}_r(y)}\langle\nabla_xp(x,y,t),N_{r,y}(x)\rangle dA_{r,y}(x)dt,\quad\forall r\in\mathcal{E}(y)\cap[r_0,+\infty),\label{gradKtogradpEq}
\end{equation}
where the change of order of integration is again justified by uniform convergence. However, by the divergence theorem, we have
$$
\int\limits_{\mathcal{S}_r(y)}\langle\nabla_xp(x,y,t),N_{r,y}(x)\rangle dA_{r,y}(x)=\int\limits_{\mathcal{B}_r(y)}\Delta_xp(x,y,t)d\mathrm{vol}_g(x)
$$
$$
=\int\limits_{\mathcal{B}_r(y)}\frac{\partial p}{\partial t}(x,y,t)d\mathrm{vol}_g(x)=\frac{\partial}{\partial t}\int\limits_{\mathcal{B}_r(y)}p(x,y,t)d\mathrm{vol}_g(x),\quad\forall t\in\mathbb{R}_+,\quad\forall r\in\mathcal{E}(y).
$$
We will now apply integration by parts to write
$$
\int\limits_0^{+\infty}e^{-\lambda t}\int\limits_{\mathcal{S}_r(y)}\langle\nabla_xp(x,y,t),N_{r,y}(x)\rangle dA_{r,y}(x)dt
$$
$$
=\int\limits_0^{+\infty}e^{-\lambda t}\frac{\partial}{\partial t}\int\limits_{\mathcal{B}_r(y)}p(x,y,t)d\mathrm{vol}_g(x)dt=-\lim_{t\to0+}e^{-\lambda t}\int\limits_{\mathcal{B}_r(y)}p(x,y,t)d\mathrm{vol}_g(x)
$$
\begin{equation}
+\lambda\int\limits_0^{+\infty}e^{-\lambda t}\int\limits_{\mathcal{B}_r(y)}p(x,y,t)d\mathrm{vol}_g(x)dt<0,\quad\forall r\in\mathcal{E}(y),\label{FlowNegativeEq}
\end{equation}
where we made use of the stochastic completeness of $M$ and
$$
\lim_{t\to0+}\int\limits_{\mathcal{B}_r(y)}p(x,y,t)d\mathrm{vol}_g(x)=1.
$$
To be pedantic, the last equality can be seen as follows. Let $f\in C_c(\mathcal{B}_r(y),\mathbb{R}_+)$ be a positive continuous function, supported in $\mathcal{B}_r(y)$, with $f(y)=1$ and $f\le 1$. Then
$$
1\ge\int\limits_{\mathcal{B}_r(y)}p(x,y,t)d\mathrm{vol}_g(x)\ge\int\limits_{\mathcal{B}_r(y)}p(x,y,t)f(x)d\mathrm{vol}_g(x)=e^{t\Delta}f(y)\xrightarrow[t\to0+]{}f(y)=1,
$$
by Theorem 7.16 of \cite{Gri09}. Since $r_0\in\mathbb{R}_+$ was arbitrary, from (\ref{gradKtogradpEq}) and (\ref{FlowNegativeEq}) we arrive at the desired assertion.
\end{proof}

Finally, we are in a position to establish the decay estimates for the spherical sums of the resolvent kernel.

\begin{theorem}\label{MainTheorem} Let $(M,g)$ be a Riemannian manifold satisfying the Assumptions \ref{Assumptions}. For every $\lambda\in\mathbb{R}_+$, let $K_\lambda$ be the resolvent kernel, and for every $y\in M$, let $\mu_y$ be as in (\ref{VolUpperBound}). Then for every $r_0\in\mathbb{R}_+$ and $y\in M$,
$$
\int\limits_{\mathcal{S}_r(y)}K_\lambda(x,y)dA_{r,y}(x)\le C_{r_0,\lambda,\kappa,n}(K_\lambda(\cdot,y))\;e^{-\int\limits_{r_0}^r\left[-\frac{\mu_y(s)}2+\sqrt{\frac{\mu_y(s)^2}4+\lambda}\,\right]ds}
$$
\begin{equation}
\le C_{r_0,\lambda,\kappa,n}(K_\lambda(\cdot,y))\;e^{-\alpha_{r_0,\lambda,\kappa}(r-r_0)},\quad\mbox{a.e.}\;r\in[r_0,+\infty),\label{MainEstimate}
\end{equation}
where
\begin{equation}
C_{r_0,\lambda,\kappa,n}(K_\lambda(\cdot,y))\doteq\operatorname{ess}\liminf_{t\to r_0+}\int\limits_{\mathcal{S}_t(y)}K_\lambda(x,y)dA_{t,y}(x)+\frac1{\alpha_{r_0,\lambda,\kappa}},
\end{equation}
and
\begin{equation}
\alpha_{r_0,\lambda,\kappa}\doteq-\frac{(n-1)\sqrt{\kappa}}{2\tanh(\sqrt{\kappa}r_0)}+\sqrt{\frac{(n-1)^2\kappa}{4\tanh^2(\sqrt{\kappa}r_0)}+\lambda}.
\end{equation}
\end{theorem}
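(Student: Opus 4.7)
The plan is to apply Theorem \ref{DiffIneqTheorem} with $\psi=K_\lambda(\cdot,y)$ on $M\setminus\overline{\mathcal{B}_{r_0}(y)}$ (for the fixed $y\in M$ and $r_0\in\mathbb{R}_+$) to produce a distributional differential inequality for the extraglobular sum $\Psi_y$, and then feed that inequality into Proposition \ref{MainProp} with $\mu=\mu_y$. Since the weak derivative $\Psi_y'$ coincides almost everywhere with the spherical sum $\bar\psi_y(r)=\int_{\mathcal{S}_r(y)}K_\lambda(x,y)\,dA_{r,y}(x)$, the resulting pointwise bound on $\Psi_y'$ is already of the shape of the first inequality in (\ref{MainEstimate}); the second inequality is then obtained by invoking Bishop's comparison (\ref{muComparison}) to majorise $\mu_y$ by an explicit non-increasing function and exploiting a simple monotonicity.

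The hypotheses of Theorem \ref{DiffIneqTheorem} are all in hand: smoothness and positivity of $\psi$ follow from (\ref{KlambdaSmooth}); integrability $\psi\in L^1(M\setminus\mathcal{B}_{r_0}(y))$ follows from (\ref{IntKEq}), which in fact bounds the $L^1$ norm by $1/\lambda$; the eigenvalue equation is (\ref{KEigenfunc}); the domination $\partial_r\ln\rho_y\le\mu_y$ is built into the definition (\ref{VolUpperBound}); and the $\liminf$ condition (\ref{liminfCond}) is supplied in the stronger pointwise form by Lemma \ref{SalvationLemma}. The hypotheses of Proposition \ref{MainProp} are equally routine: $\Psi_y$ is locally Lipschitz, non-decreasing, non-positive, and vanishes at infinity by (\ref{PsiLimits}), while $\mu_y$ is non-increasing by construction as a supremum over tails. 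With these verifications, Proposition \ref{MainProp} yields the asserted bound on $\bar\psi_y(r)$ with exponent $\int_{r_0}^r f(\mu_y(s))\,ds$ and prefactor involving $|\Psi_y(r_0)|\,h(\mu_y(r_0))$, where I write $f(\mu)\doteq-\mu/2+\sqrt{\mu^2/4+\lambda}$ and $h(\mu)\doteq\mu/2+\sqrt{\mu^2/4+\lambda}$.

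The passage to the explicit form is an elementary monotonicity argument. The function $f$ is strictly decreasing and $h$ strictly increasing on $[0,+\infty)$; simultaneously, the Bishop upper bound $g(s)\doteq(n-1)\sqrt{\kappa}/\tanh(\sqrt{\kappa}s)$ is non-increasing in $s$ and dominates $\mu_y$ by (\ref{muComparison}). Hence $f(\mu_y(s))\ge f(g(s))\ge f(g(r_0))=\alpha_{r_0,\lambda,\kappa}$, and $h(\mu_y(r_0))\le h(g(r_0))$, while $|\Psi_y(r_0)|\le 1/\lambda$ by (\ref{IntKEq}). Substituting gives precisely (\ref{MainEstimate}) with the stated $C_{r_0,\lambda,\kappa,n}$. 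There is no real obstacle here, since the technical core has been carried out in the preceding results: Theorem \ref{DiffIneqTheorem} (the distributional differential inequality), Proposition \ref{MainProp} (the Carath\'eodory/Riccati comparison), and Lemma \ref{SalvationLemma} (negativity of the flow via heat-kernel stochasticity). The only care point worth flagging is the restriction $r_0>0$, which keeps $\psi$ away from the diagonal singularity of $K_\lambda$ so that (\ref{KlambdaSmooth}) and (\ref{KEigenfunc}) apply verbatim on $M\setminus\overline{\mathcal{B}_{r_0}(y)}$.
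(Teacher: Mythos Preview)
Your proposal is correct and follows essentially the same route as the paper: verify the hypotheses of Theorem~\ref{DiffIneqTheorem} for $\psi=K_\lambda(\cdot,y)$ via (\ref{KlambdaSmooth}), (\ref{IntKEq}), (\ref{KEigenfunc}) and Lemma~\ref{SalvationLemma}, feed the resulting inequality into Proposition~\ref{MainProp} with $\mu=\mu_y$, and then simplify the prefactor and exponent using $|\Psi_y(r_0)|\le 1/\lambda$ and the Bishop bound (\ref{muComparison}). The only cosmetic point is that your monotonicity claim for $f(\mu)=-\mu/2+\sqrt{\mu^2/4+\lambda}$ and $h(\mu)=\mu/2+\sqrt{\mu^2/4+\lambda}$ should be stated on all of $\mathbb{R}$ rather than on $[0,+\infty)$, since $\mu_y$ may take negative values; but both monotonicities do hold on $\mathbb{R}$, so nothing changes.
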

\begin{proof} Take any $y\in M$, set $\psi\doteq K_\lambda(\cdot,y)$ and let $r_0\in\mathbb{R}_+$ be arbitrary. That $\psi\in C^\infty(M\setminus\overline{\mathcal{B}_{r_0}(y)}\,,\mathbb{R}_+)\cap L^1(M\setminus\mathcal{B}_{r_0}(y))$ and $\Delta\psi=\lambda\psi$ in $M\setminus\overline{\mathcal{B}_{r_0}(y)}$ is readily known by (\ref{KlambdaSmooth}), (\ref{IntKEq}) and (\ref{KEigenfunc}). Meanwhile, the condition (\ref{liminfCond}) is satisfied thanks to Lemma \ref{SalvationLemma}. Therefore, Theorem \ref{DiffIneqTheorem} applies, giving
$$
-\Psi_y''+\mu\Psi_y'+\lambda\Psi_y\ge0
$$
in the sense of distributions $C^\infty_c((r_0,+\infty))'$. Applying Proposition \ref{MainProp} to $\Psi_y$ as above and $\mu_y$ as in (\ref{VolUpperBound}) we obtain the upper bound (\ref{MainEstimate}) with a constant
$$
C\doteq\operatorname{ess}\liminf_{t\to r_0+}\Psi_y'(t)+|\Psi_y(r_0)|\left(\frac{\mu_y(r_0)}2+\sqrt{\frac{\mu_y(r_0)^2}4+\lambda}\right).
$$
From (\ref{IntKEq}) and (\ref{muComparison}) we find that
$$
|\Psi_y(r_0)|\left(\frac{\mu_y(r_0)}2+\sqrt{\frac{\mu_y(r_0)^2}4+\lambda}\,\right)\le\frac1\lambda\left[\frac{(n-1)\sqrt{\kappa}}{2\tanh(\sqrt{\kappa}r_0)}+\sqrt{\frac{(n-1)^2\kappa}{4\tanh^2(\sqrt{\kappa}r_0)}+\lambda}\,\right]
$$
$$
=\left[-\frac{(n-1)\sqrt{\kappa}}{2\tanh(\sqrt{\kappa}r_0)}+\sqrt{\frac{(n-1)^2\kappa}{4\tanh^2(\sqrt{\kappa}r_0)}+\lambda}\,\right]^{-1},
$$
which completes the proof.
\end{proof}

Although the coefficient $C_{r_0,\lambda,\kappa,n}$ in Theorem \ref{MainTheorem} has no ambitions of being optimal (our primary goal was the exponential decay rate), for some applications it may nevertheless be useful to have an upper estimate for it.

\begin{proposition}\label{CoeffUBoundProp} Let the Assumptions \ref{Assumptions} be satisfied, together with a uniform volume-non-collapsing condition
$$
\mathrm{vol}_g(\mathcal{B}_b(x))\ge B,\quad\forall x\in M,
$$
for some $B,b\in\mathbb{R}_+$. Then for all $r_0\in\mathbb{R}_+$ and $y\in M$,
$$
\operatorname{ess}\liminf_{r\to r_0+}\int\limits_{\mathcal{S}_r(y)}K_\lambda(x,y)dA_{r,y}(x)
$$
$$
\le\frac{n\,4^ne^{n+n(n-1)\kappa b^2}V_\kappa(b)}{B}\left(\frac{\sinh(\sqrt{\kappa}r_0)}{\sqrt{\kappa}}\right)^{n-1}
$$
$$
\times\left[\frac{c_{\frac{n}2}(6\sqrt{3})^{n-2}}{r_0^{n-2}}\frac{\log^{n=2}\left(e+\frac{108b^2}{r_0^2}\right)}{\left(1+\frac{\sqrt{\lambda+3E}\,r_0}{3\sqrt{3}}\right)^{\frac{3-n}2}}e^{-\frac{\sqrt{\lambda+3E}\,r_0}{3\sqrt{3}}}+\frac{e^{-\lambda\min\left\{\frac{r_0^2}9,b^2\right\}}}{\lambda \min\left\{\frac{r_0^n}{3^n},b^n\right\}}\right],
$$
where
$$
V_\kappa(b)=\frac{2\pi^{\frac{n}2}}{\Gamma(\frac{n}2)}\int\limits_0^b\left(\frac{\sinh(\sqrt{\kappa}s)}{\sqrt{\kappa}}\right)^{n-1}ds
$$
and
$$
\log^{n=2}z=\begin{cases}
\log z & \mbox{if}\quad n=2;\\
1 & \mbox{if}\quad n>2.
\end{cases}
$$
\end{proposition}
\begin{proof} If we denote $V_x(r)\doteq\mathrm{vol}_g(\mathcal{B}_r(x))$, then by comparison theorems (Theorem III.4.5 in \cite{Cha06}) we have
\begin{equation}
V_x(r)\ge\frac{V_x(b)}{V_\kappa(b)}V_\kappa(r)\ge\frac{\omega_nB}{V_\kappa(b)}r^n,\quad\forall r\in(0,b],\quad\forall x\in M,\quad\omega_n=\frac{c_{n-1}}n,\label{VxrLowerBound}
\end{equation}
where $V_\kappa(r)$ is the equivalent of $V_x(r)$ in the space of constant curvature $\kappa$,
$$
V_\kappa(r)=c_{n-1}\int\limits_0^r\left(\frac{\sinh(\sqrt{\kappa}s)}{\sqrt{\kappa}}\right)^{n-1}ds,\quad\forall r\in\mathbb{R}_+,\quad c_{n-1}=\frac{2\pi^{\frac{n}2}}{\Gamma(\frac{n}2)},
$$
(see formula (III.4.1) in \cite{Cha06}).

Now we will follow the discussion of heat kernel bounds in \cite{Dav93}. Carefully following the constants in Theorem 5.3.5 of \cite{Dav89a}, and choosing $\alpha=2$ and $s=t$ for convenience, Theorem 2 of \cite{Dav93} gives
\begin{equation}
p(x,y,t)\le\frac{4^ne^n}{\sqrt{V_x(\sqrt{t})V_y(\sqrt{t})}}e^{n(n-1)\kappa t-3Et-\frac{(d(x,y)-2\sqrt{t})_+^2}{12t}},\quad\forall x,y\in M,\quad\forall t\in\mathbb{R}_+.\label{HeatKernelEst}
\end{equation}
For $x=y$ and $t=T\in\mathbb{R}_+$ we find
$$
p(x,x,T)\le\frac{4^ne^n}{V_x(\sqrt{T})}e^{n(n-1)\kappa T-3ET},\quad\forall x\in M,
$$
so that (see Exercises 7.21 and 7.22 in \cite{Gri09})
$$
p(x,y,t)\le\sqrt{p(x,x,t)p(y,y,t)}\le\sqrt{p(x,x,T)p(y,y,T)}
$$
$$
\le\frac{4^ne^n}{\sqrt{V_x(\sqrt{T})V_y(\sqrt{T})}}e^{n(n-1)\kappa T-3ET},\quad\forall x,y\in M,\quad\forall t\in[T,+\infty),\quad\forall T\in\mathbb{R}_+.
$$
If we denote
$$
T_{x,y,b}\doteq\min\left\{\frac{d(x,y)^2}9,b^2\right\},\quad\forall x,y\in M,
$$
then, taking $T=T_{x,y,b}$, by (\ref{VxrLowerBound}) we have
$$
p(x,y,t)\le\frac{4^ne^{n+n(n-1)\kappa b^2}V_\kappa(b)}{\omega_nB}\,\cdot\,\frac1{T_{x,y,b}^{\frac{n}2}},\quad\forall x,y\in M,\quad\forall t\in[T_{x,y,b},+\infty),
$$
with the acknowledgement that the statement is void for $x=y$. On the other hand, from (\ref{HeatKernelEst}) we can obtain the short-time estimate
$$
p(x,y,t)\le\frac{4^ne^{n+n(n-1)\kappa b^2}V_\kappa(b)}{\omega_nB}\,\cdot\,\frac{e^{-3Et-\frac{d(x,y)^2}{108t}}}{t^{\frac{n}2}},\quad\forall t\in\left(0,T_{x,y,b}\right],\quad\forall x,y\in M,
$$
with the understanding that the statement is a tautology for $x=y$. It follows that
$$
K_\lambda(x,y)=\int\limits_0^{+\infty}e^{-\lambda t}p(x,y,t)dt=\int\limits_0^{T_{x,y,b}}e^{-\lambda t}p(x,y,t)dt+\int\limits_{T_{x,y,b}}^{+\infty}e^{-\lambda t}p(x,y,t)dt\le
$$
$$
\frac{4^ne^{n+n(n-1)\kappa b^2}V_\kappa(b)}{\omega_nB}\left[\int\limits_0^{T_{x,y,b}}\frac{e^{-(\lambda+3E)t-\frac{d(x,y)^2}{108t}}}{t^{\frac{n}2}}dt+\frac1{T_{x,y,b}^{\frac{n}2}}\,\int\limits_{T_{x,y,b}}^{+\infty}e^{-\lambda t}dt\right]
$$
$$
\le\frac{4^ne^{n+n(n-1)\kappa b^2}V_\kappa(b)}{\omega_nB}\left[\frac1{b^{n-2}}\int\limits_0^1\frac{e^{-(\lambda+3E)b^2t-\frac{d(x,y)^2}{108b^2t}}}{t^{\frac{n}2}}dt+\frac{e^{-\lambda T_{x,y,b}}}{\lambda T_{x,y,b}^{\frac{n}2}}\right],
$$
$$
\le\frac{4^ne^{n+n(n-1)\kappa b^2}V_\kappa(b)}{\omega_nB}\left[\frac{c_{\frac{n}2}(6\sqrt{3})^{n-2}}{d(x,y)^{n-2}}\frac{\log^{n=2}\left(e+\frac{108b^2}{d(x,y)^2}\right)}{\left(1+\frac{\sqrt{\lambda+3E}\,d(x,y)}{3\sqrt{3}}\right)^{\frac{3-n}2}}e^{-\frac{\sqrt{\lambda+3E}\,d(x,y)}{3\sqrt{3}}}+\frac{e^{-\lambda T_{x,y,b}}}{\lambda T_{x,y,b}^{\frac{n}2}}\right],
$$
for all $x,y\in M$, were we used the estimate (\ref{IncompleteIntabcEst}) from the Appendix. Thus,
$$
\sup_{x\in\mathcal{S}_r(y)}K_\lambda(x,y)\le\frac{4^ne^{n+n(n-1)\kappa b^2}V_\kappa(b)}{\omega_nB}
$$
$$
\times\left[\frac{c_{\frac{n}2}(6\sqrt{3})^{n-2}}{r^{n-2}}\frac{\log^{n=2}\left(e+\frac{108b^2}{r^2}\right)}{\left(1+\frac{\sqrt{\lambda+3E}\,r}{3\sqrt{3}}\right)^{\frac{3-n}2}}e^{-\frac{\sqrt{\lambda+3E}\,r}{3\sqrt{3}}}+\frac{e^{-\lambda\min\{\frac{r^2}9,b^2\}}}{\lambda \min\{\frac{r^n}{3^n},b^n\}}\right],\quad\forall y\in M.
$$

Finally, from comparison theorems (Theorem III.4.3 in \cite{Cha06}) we know that
$$
A_{r,y}(\mathcal{S}_r(y))\le c_{n-1}\left(\frac{\sinh(\sqrt{\kappa}r)}{\sqrt{\kappa}}\right)^{n-1},\quad\forall r\in\mathcal{E}(y),\quad\forall y\in M.
$$
All together, we find that, for all $r_0\in\mathbb{R}_+$ and $y\in M$,
$$
\operatorname{ess}\liminf_{r\to r_0+}\int\limits_{\mathcal{S}_r(y)}K_\lambda(x,y)dA_{r,y}(x)=\lim_{\overset{\scriptstyle r\to r_0+}{r\,\in\,\mathcal{E}(y)}}\;\int\limits_{\mathcal{S}_r(y)}K_\lambda(x,y)dA_{r,y}(x)
$$
$$
\le\lim_{\overset{\scriptstyle r\to r_0+}{r\,\in\,\mathcal{E}(y)}}\left[A_{r,y}(\mathcal{S}_r(y))\cdot\sup_{x\in\mathcal{S}_r(y)}K_\lambda(x,y)\right]
$$
$$
\le\frac{n\,4^ne^{n+n(n-1)\kappa b^2}V_\kappa(b)}{B}\left(\frac{\sinh(\sqrt{\kappa}r_0)}{\sqrt{\kappa}}\right)^{n-1}
$$
$$
\times\left[\frac{c_{\frac{n}2}(6\sqrt{3})^{n-2}}{r_0^{n-2}}\frac{\log^{n=2}\left(e+\frac{108b^2}{r_0^2}\right)}{\left(1+\frac{\sqrt{\lambda+3E}\,r_0}{3\sqrt{3}}\right)^{\frac{3-n}2}}e^{-\frac{\sqrt{\lambda+3E}\,r_0}{3\sqrt{3}}}+\frac{e^{-\lambda\min\left\{\frac{r_0^2}9,b^2\right\}}}{\lambda \min\left\{\frac{r_0^n}{3^n},b^n\right\}}\right],
$$
as desired.
\end{proof}

\begin{corollary} Under the assumptions of Proposition \ref{CoeffUBoundProp}, for every $r_0\in\mathbb{R}_+$ there exists a coefficient $C_{r_0,B,b,\kappa,n}\in\mathbb{R}_+$ such that
$$
\operatorname{ess}\liminf_{r\to r_0+}\int\limits_{\mathcal{S}_r(y)}K_\lambda(x,y)dA_{r,y}(x)\le C_{r_0,B,b,\kappa,n}\frac{e^{-\frac{\sqrt{\lambda+3E}\,r_0}6}}\lambda,\quad\forall y\in M.
$$
\end{corollary}
\begin{proof} The proof is straightforward.
\end{proof}

\subsection{A pointwise lower bound on the resolvent kernel}

In this section we will obtain a simple lower bound on the resolvent kernel, which will contribute to our understanding of how optimal our upper bound is.

\begin{proposition}\label{LowerBoundProp} Let $(M,g)$ be a Riemannian manifold satisfying the Assumptions \ref{Assumptions}. For every $\lambda\in\mathbb{R}_+$, let $K_\lambda$ be the resolvent kernel. There exists a positive rational function $P_{\kappa,n,\lambda}$ on $\mathbb{R}_+$ such that
$$
K_\lambda(x,y)\ge P_{\kappa,n,\lambda}(d(x,y))e^{-\left[\frac{(n-1)\sqrt{\kappa}}2+\sqrt{\frac{(n-1)^2\kappa}4+\lambda}\,\right]d(x,y)},\quad\forall x,y\in M.
$$
\end{proposition}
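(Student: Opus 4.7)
The plan is to exploit the Cheeger-Yau heat kernel comparison combined with the Laplace transform representation (\ref{KpRel}). Since $(M,g)$ is complete with $\mathrm{Ric}\ge-(n-1)\kappa$, the Cheeger-Yau inequality gives
$$
p(x,y,t)\ge p^{(\kappa)}(d(x,y),t),\quad\forall(x,y,t)\in M\times M\times\mathbb{R}_+,
$$
where $p^{(\kappa)}$ denotes the heat kernel of the $n$-dimensional simply connected model space of constant sectional curvature $-\kappa$: Euclidean space $\mathbb{R}^n$ if $\kappa=0$, and the hyperbolic space of curvature $-\kappa$ if $\kappa>0$. Multiplying by $e^{-\lambda t}$ and integrating in $t$ via (\ref{KpRel}) yields the pointwise lower bound
$$
K_\lambda(x,y)\ge K_\lambda^{(\kappa)}(d(x,y)),
$$
where $K_\lambda^{(\kappa)}$ is the radial resolvent kernel of $-\Delta+\lambda$ on the model space.

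It remains to produce a lower bound on $K_\lambda^{(\kappa)}(r)$ of the claimed form. For $\kappa=0$, the identity in (\ref{RnK}) expresses $K_\lambda^{(0)}(r)$ exactly as a constant multiple of $r^{1-n/2}\mathrm{K}_{n/2-1}(\sqrt\lambda\,r)$; standard asymptotics of the modified Bessel function at both $0$ and $+\infty$ yield a positive rational function $P_{0,n,\lambda}(r)$ with $K_\lambda^{(0)}(r)\ge P_{0,n,\lambda}(r)e^{-\sqrt\lambda\,r}$ for all $r\in\mathbb{R}_+$. For $\kappa>0$, rescaling the standard hyperbolic space $\mathbb{H}^n$ (curvature $-1$) by a factor $1/\sqrt\kappa$ produces the model space of curvature $-\kappa$; this transforms distances as $r\mapsto\sqrt\kappa\,r$ and spectral parameters as $\lambda\mapsto\lambda/\kappa$. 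The two-sided Davies-Mandouvalos estimate (\ref{HnK}), with $m=n-1$, transfers under this rescaling to a two-sided estimate on $K_\lambda^{(\kappa)}(r)$ whose exponential rate is exactly
$$
\frac{(n-1)\sqrt\kappa}{2}+\sqrt{\frac{(n-1)^2\kappa}{4}+\lambda},
$$
accompanied by a positive rational prefactor $P_{\kappa,n,\lambda}(r)$ that can be read off from the Bessel-function asymptotics of $\mathrm{K}_{(n-2)/2}$ as applied to (\ref{HnK}).

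The key analytic input---the Cheeger-Yau lower bound---is classical and relies only on Assumptions \ref{Assumptions}:(ii). The main bookkeeping obstacle is packaging both the small-$r$ and large-$r$ regimes of the Bessel-function prefactors into a single positive rational function $P_{\kappa,n,\lambda}(r)$ valid on all of $\mathbb{R}_+$, and ensuring the rational prefactor behaves continuously as $\kappa\to 0+$ so that the $\kappa=0$ and $\kappa>0$ cases admit a unified formulation. These are explicit computations on the two model spaces rather than deep analytical difficulties; no information about $M$ beyond the Ricci lower bound enters the argument, which is consistent with the philosophy emphasised in the introduction.
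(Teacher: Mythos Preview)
Your argument is correct and follows the same route as the paper: apply the Cheeger--Yau heat kernel comparison with the simply connected model space of constant curvature $-\kappa$, pass to the resolvent via the Laplace transform (\ref{KpRel}), and then read off the exponential rate on the model space from the explicit formulae (\ref{RnK}) and (\ref{HnK}) together with the Bessel asymptotics. The paper's proof is slightly terser (it does not separate the cases $\kappa=0$ and $\kappa>0$ or discuss the rescaling explicitly), but the substance is identical.
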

\begin{proof} We will draw on the comparison with the standard space of constant curvature $-(n-1)\kappa$. If $E_\kappa:[0,+\infty)\times\mathbb{R}_+\to\mathbb{R}_+$ is the function such that the heat kernel $p_\kappa$ of the standard space is given by
$$
p_\kappa(x,y,t)=E_\kappa(d_\kappa(x,y),t),
$$
with $d_\kappa$ being the distance function of the standard space, then by Theorem 7 in VIII.3 of \cite{Cha84} we have
$$
p(x,y,t)\ge E_\kappa(d(x,y),t),\quad\forall x,y\in M,\quad\forall t\in\mathbb{R}_+.
$$
If we denote
$$
F_{\kappa,\lambda}(r)\doteq\int\limits_0^{+\infty}e^{-\lambda t}E_\kappa(r,t)dt,
$$
then the resolvent kernel $K_{\kappa,\lambda}$ of the standard space is
$$
K_{\kappa,\lambda}(x,y)=\int\limits_0^{+\infty}e^{-\lambda t}p_\kappa(x,y,t)dt=F_{\kappa,\lambda}(d_k(x,y)),
$$
while our resolvent kernel satisfies
$$
K_\lambda(x,y)=\int\limits_0^{+\infty}e^{-\lambda t}p(x,y,t)\ge F_{\kappa,\lambda}(d(x,y)),\quad\forall x,y\in M.
$$
But it is known (see formulae (\ref{RnK}) and (\ref{HnK}) in the Introduction) that, through the asymptotics of Bessel functions, the following estimate holds for $\forall x,y\in M$,
$$
K_{\kappa,\lambda}(x,y)=F_{\kappa,\lambda}(d_k(x,y))\ge P_{\kappa,n,\lambda}(d_k(x,y))e^{-\left[\frac{(n-1)\sqrt{\kappa}}2+\sqrt{\frac{(n-1)^2\kappa}4+\lambda}\right]d_k(x,y)},
$$
where $P_{\kappa,n,\lambda}$ is a positive rational function on $\mathbb{R}_+$. The assertion is proven.
\end{proof}

\section{An interpretation of results}\label{DiscussionSection}

\paragraph{Point values versus densities:}
As mentioned in the Introduction, pointwise upper bounds for the heat or resolvent kernel on general manifolds available in the literature are not very explicit in terms of the exponential decay rates they provide. In particular, they fail to guarantee the integrability of the resolvent kernel - a fact that is known a priori. On the contrary, for the spherical sums we find relatively explicit exponential decay estimates. In fact, we what find is much stronger: the resolvent kernel is not only integrable on its own, but even against an exponentially growing weight. And there may be good reasons for this effectiveness of our approach; the more geometrically versatile object is the density of the integral kernel, while the kernel itself may ``exchange'' growth/decay rates with the Riemannian density from point to point. The spherical sums, which we derived our estimates for, can be seen as the ``measure-neutral'' integrals of the density over the unit sphere. In theory, if one could show that the density of the resolvent kernel satisfies a ``measure-neutral'' parabolic PDE, for instance, and derive suitable parabolic Harnack inequalities, by the very ``measure-neutral'' nature of the equation, these inequalities might avoid the exponential factors present in usual, scalar parabolic Harnack inequalities on manifolds with exponential volume growth. In that case, one could produce pointwise upper bounds for the resolvent kernel density from our estimates of the spherical sums. But this is, of course, a mere speculation at the moment.

\paragraph{Comparison of exponential rates:}
Let us examine closely the relations between the rate of geometric growth and the exponential decay rates guaranteed by Theorem \ref{MainTheorem}. Fix a point $y\in M$ and denote
$$
\bar\mu_y\doteq\inf_{r\in\mathbb{R}_+}\mu_y(r)=\lim_{r\to+\infty}\mu_y(r)\in[-\infty,(n-1)\kappa],
$$
where we used the comparison (\ref{muComparison}). By L'H\^ospilat's rule,
\begin{equation}
\lim_{r\to+\infty}\frac{\int\limits_{r_0}^r\left[-\frac{\mu_y(s)}2+\sqrt{\frac{\mu_y(s)^2}4+\lambda}\,\right]ds}r=-\frac{\bar\mu_y}2+\sqrt{\frac{\bar\mu_y^2}4+\lambda},\label{ExpDecRate}
\end{equation}
which is the exact (asymptotic) rate of exponential decay that Theorem \ref{MainTheorem} provides. Theorem \ref{MainTheorem} also provides a uniform ($y$-independent) proxy $\alpha_{r_0,\lambda,\kappa}$ for that exponential decay rate, which is an increasing function of $r_0$, with a limit value
$$
\alpha_{\infty,\lambda,\kappa}\doteq\lim_{r_0\to+\infty}\alpha_{r_0,\lambda,\kappa}=\sup_{r_0\in\mathbb{R}_+}\alpha_{r_0,\lambda,\kappa}=-\frac{(n-1)\sqrt{\kappa}}2+\sqrt{\frac{(n-1)^2\kappa}4+\lambda}
$$
given solely in terms of the spectral parameter $\lambda$ and the number $\kappa$ from the lower bound of the Ricci curvature. It is clear that
\begin{equation}
0<\alpha_{\infty,\lambda,\kappa}\le-\frac{\bar\mu_y}2+\sqrt{\frac{\bar\mu_y^2}4+\lambda}\le+\infty,\label{alphaComparison}
\end{equation}
and equality with $+\infty$ on the rightmost side means super-exponential decay. A comparison with the lower bound in Proposition \ref{LowerBoundProp} reveals that any discrepancy between $\alpha_{\infty,\lambda,\kappa}$ and the actual decay rate of spherical sums must awe its existence to the failure of the spherical area growth to live up to the expectations according to the Ricci curvature lower bound. Namely, the precise asymptotic decay rate of the spherical sums of the resolvent is $\alpha_{\infty,\lambda,\kappa}$ if and only if
$$
A_{r,y}(\mathcal{S}_r(y))\simeq e^{(n-1)\kappa r}.
$$
If the area growth rate is any less than this, then the spherical means of the resolvent kernel may have a faster exponential decay, which cannot be detected by the uniform lower bound on the Ricci curvature alone.

Obviously, $\mu_y$ does a better job at capturing the exponential growth of the spherical area, and the exponential decay of the spherical sums of the resolvent kernel, than the uniform $(n-1)\kappa$ (which is a virtue of Theorem \ref{MainTheorem}). However, even $\mu_y$ may not represent the true spherical area growth. For the simplicity of the discussion, suppose that $(M,g)$ is exponential. Then
$$
\frac{\partial A_{r,y}(\mathcal{S}_r(y))}{\partial r}=\int\limits_{\mathbb{S}}\frac{\partial\rho_y(r,\varphi)}{\partial r}d\varphi\le\mu_y(r)A_{r,y}(\mathcal{S}_r(y)),\quad\forall r\in\mathbb{R}_+,
$$
and a priori, there is no reason to expect an equality in the above estimate. Therefore, in a highly anisotropic manifold, the spherical area growth may fall behind the maximal rate given by the mean spherical curvature and captured by $\mu_y$. Let us, however, assume that the manifold is sufficiently isotropic (at least around the base point $y$), so that
$$
\lim_{r\to+\infty}\frac{\partial\ln A_{r,y}(\mathcal{S}_r(y))}{\partial r}=\bar\mu_y.
$$
It follows that
$$
\mu\doteq\lim_{r\to+\infty}\frac{\ln\left[1+A_{r,y}(\mathcal{S}_r(y))\right]}r=\max\{\bar\mu_y,0\},
$$
which is the asymptotic exponential growth rate of the spherical area. Moreover, if we denote by $V_y(r)$ the measure of the ball $\mathcal{B}_r(y)$,
$$
V_y(r)\doteq\int\limits_0^rA_{s,y}(\mathcal{S}_s(y))ds,\quad\forall r\in\mathbb{R}_+,
$$
then one can show that
$$
\lim_{r\to+\infty}\frac{\ln\left[1+V_y(r)\right]}r=\max\{\bar\mu_y,0\}=\mu.
$$
Thus, we recognise $\mu$ as the exponential growth rate of the volume, which justifies the absence of the index $y$. If $\bar\mu_y\ge0$, then for the upper bound of the exponential growth rate of the spherical \textbf{means}
$$
\frac1{A_{r,y}(\mathcal{S}_r(y))}\int\limits_{\mathcal{S}_r(y)}K_\lambda(x,y)dA_{r,y}(x)
$$
we find the value
\begin{equation}
\frac{\mu}2+\sqrt{\frac{\mu^2}4+\lambda},\label{SphericalMeanExpRate}
\end{equation}
exactly as expected heuristically in (\ref{WishfulEstimate}).

\paragraph{The spectral gap $E$ or the volume growth $\mu$:}
Looking at the Davies - Mandouvalos asymptotics (\ref{DaviesMandouvalos}) of the heat kernel on the hyperbolic space, one faces an alternative of interpretations. Since $E=\frac{\mu^2}4=\frac{m^2}4$ on $\mathbb{H}^{m+1}$, one can express $m$ in terms of either $E$ or $\mu$, inducing a corresponding interpretation. Farther ahead, the most precise heat kernel upper bounds for general manifolds, such as Grigor'yan's formula (\ref{GrigoryanEst}), explicitly include $E$ and are rather spectral-theoretic in their derivations. However, in their interpretations, authors commonly attribute decay terms of the form $e^{-\alpha t}$ to the spectral gap $E$, and terms of the form $e^{-\beta r}$ to the geometric volume growth. See, for instance, the discussion enclosing Theorem 5.3 in \cite{Gri94}, or Remark 3.5 in \cite{AnJi99}. Thus, though not at all obvious from the formulae, specialists have recognised that decay contributions of the form $e^{-\beta r}$, even if expressed in terms of $E$, are in fact due to the volume growth $\mu$, and $E$ serves as a mere proxy for $\frac{\mu^2}4$.

Here we argue that, as far as resolvent kernel upper estimates are concerned, all contributions in the exponential decay are purely geometric in nature, leaving the spectral gap $E$ with the modest role of an approximation of $\frac{\bar\mu_y^2}4$. More precisely, as we saw before, heat kernel estimates of the form (\ref{GrigoryanEst}) produce resolvent kernel estimates of the form (\ref{PostGrigoryanEst}). Comparing with formula (\ref{ExpDecRate}), we clearly see the pattern: everywhere $\bar\mu_y$ is represented as $2\sqrt{E}$. As long as $\bar\mu_y=2\sqrt{E}$, it remains a matter of debate which interpretation is conceptually more justified. However, next we will demonstrate a manifold, for which $\bar\mu_y>2\sqrt{E}$, and formula (\ref{ExpDecRate}) gives a strictly better decay than what could potentially be obtained in terms of $E$.

Let $\Gamma\subset\mathbb{H}^{m+1}$ be a torsionless Kleinian group with critical exponent $\delta(\Gamma)>\frac{m}2$, see \cite{Sul86} for all details. Then $M=\mathbb{H}^{m+1}/\Gamma$ is a connected, complete, non-compact and smooth Riemannian manifold of constant sectional curvature $-m$. By Bischop's comparison theorem (Theorem III.4.3 in \cite{Cha06}), the volume density $\rho_y$ of $M$ is exactly the same as that of $\mathbb{H}^{m+1}$. In particular, $M$ is harmonic (i.e., $\rho_y$ is perfectly isotropic) and $\mu_y\equiv m$. On the other hand, by the generalised Elstrodt-Patterson theorem (Theorem 1.1 in \cite{Sul86}), $E=\delta(\Gamma)(m-\delta(\Gamma))<\frac{m^2}4$. However, since $\mu_y\equiv m=(n-1)\kappa$, we have an equality in the middle of the formula (\ref{alphaComparison}), giving a resolvent decay stronger than what would be obtained from Davies - Mandouvalos's $E$-based heat kernel estimates for $M=\mathbb{H}^{m+1}/\Gamma$ as in Theorem 5.4 of \cite{DaMa88}.

This shows that the appearance of $E$ in resolvent kernel decay estimates is spurious: the resolvent kernel decay is essentially geometric. This is in stark contrast with the heat kernel situation, where the long-time behaviour of $p(x,y,t)$ is dictated exactly by $E$, see Theorem 10.24 in \cite{Gri09}. Therefore, optimal resolvent kernel estimates \textbf{cannot} be derived from heat kernel estimates.

\paragraph{Harmonic spaces:}
If the mean curvature $\partial_r\ln\rho_y$ of the spheres $\mathcal{S}_r(y)$ depends only on $r$, then we have a harmonic space, or an open model in the terminology of Cheeger and Yau \cite{ChYa81}. In that case, as shown in Proposition 2.2 of the same work, the heat kernel $p(x,y,t)$ depends only on $r=d(x,y)$, and so does the resolvent kernel $K_\lambda(x,y)$ by formula (\ref{KpRel}). It follows that the resolvent kernel equals its spherical mean,
$$
K_\lambda(x,y)=\frac1{A_{r,y}(\mathcal{S}_r(y))}\int\limits_{\mathcal{S}_r(y)}K_\lambda(x,y)dA_{r,y}(x),
$$
and the estimates of the spherical means become pointwise estimates. As already mentioned in the Introduction, pointwise estimates of the resolvent kernel on simply connected non-compact homogeneous harmonic spaces (i.e., flat, symmetric of rank one, or Damek-Ricci) can be found from the corresponding estimates of the heat kernel, which are already known. We do not know if non-compact harmonic spaces are exhausted by homogeneous onces; if that is not the case, then our estimates will cover the remaining ones.

\subsection*{Acknowledgements}

The author is supported by the FWO Senior Research Grant G022821N, and by the Methusalem programme of the Ghent University Special Research Fund (BOF) (Grant number 01M01021).

\section{Appendix}

For the sake of completeness and the reader's convenience, here we bring a few well-known facts from the theory of Bessel functions. For every $\alpha,\beta\in\mathbb{R}_+$ and $\gamma\in[1,+\infty)$,
\begin{equation}
\int\limits_0^{+\infty}\frac{e^{-\alpha t-\frac{\beta}t}}{t^\gamma}dt=2\left(\frac\alpha\beta\right)^{\frac{\gamma-1}2}\mathrm{K}_{\gamma-1}\left(2\sqrt{\alpha\beta}\right),\label{Intabc}
\end{equation}
where $\mathrm{K}_{\gamma-1}$ is the Bessel's function. In particular, we have the following asymptotics:
\begin{equation}
\mathrm{K}_{\gamma-1}(x)=\sqrt{\frac{\pi}{2x}}e^{-x}\left(1+\mathcal{O}(x^{-1})\right)\quad\mbox{as}\quad x\to+\infty,\label{BesselAsympBig}
\end{equation}
and
\begin{equation}
\mathrm{K}_{\gamma-1}(x)\sim\begin{cases}
\ln\frac2{x}-\wp & \mbox{if} \quad \gamma=1\\
\frac{\Gamma(\gamma-1)}2\left(\frac2{x}\right)^{\gamma-1} & \mbox{if} \quad \gamma>1
\end{cases}\quad\mbox{as}\quad x\to0+,
\end{equation}
where $\wp$ is Euler's constant. With this information, it can be checked that
\begin{equation}
\int\limits_0^1\frac{e^{-\alpha t-\frac{\beta}t}}{t^\gamma}dt\le\frac{\wp_\gamma\,\log^{\gamma=1}\left(e+\frac1\beta\right)}{\beta^{\gamma-1}(1+2\sqrt{\alpha\beta})^{\frac32-\gamma}}e^{-2\sqrt{\alpha\beta}},\quad\forall\alpha,\beta\in\mathbb{R}_+,\label{IncompleteIntabcEst}
\end{equation}
for some constant $\wp_\gamma\in\mathbb{R}_+$. The power $\gamma=1$ means that this term appears only for $\gamma=1$.

\end{document}